\numberwithin{equation}{section} 
\newenvironment{pdeq}{ \left\{ \begin{aligned}}{\end{aligned}\right.}
\newcommand{\np}[1]{(#1)}
\newcommand{\nb}[1]{[#1]}
\newcommand{\bp}[1]{\big(#1\big)}
\newcommand{\bb}[1]{\big[#1\big]}
\newcommand{\Bp}[1]{\bigg(#1\bigg)}
\newcommand{\BBp}[1]{\Bigg(#1\Bigg)}
\newcommand{\Bb}[1]{\bigg[#1\bigg]}
\newcommand{\calt}{{\mathcal T}}
\newcommand{\R}{\mathbb{R}}
\newcommand{\C}{\mathbb{C}}
\newcommand{\Z}{\mathbb{Z}}
\DeclareMathOperator{\e}{e}
\newcommand{\B}{B}
\DeclareMathOperator{\Div}{div}
\DeclareMathOperator{\impart}{Im}
\newcommand{\ra}{\rightarrow}
\newcommand{\set}[1]{\ensuremath{\{#1\}}}
\newcommand{\setc}[2]{\ensuremath{\{#1\ \lvert\ #2\}}}
\newcommand{\closure}[2]{\overline{#1}^{#2}}
\newcommand{\quotientmap}{\pi}
\newcommand{\grp}{G}
\newcommand{\dualgrp}{\widehat{G}}
\newcommand{\torus}{{\mathbb T}}
\newcommand{\idmatrix}{I}
\newcommand{\Rn}{{\R^n}}
\newcommand{\ddz}{\frac{{\mathrm d}}{{\mathrm d}z}}
\newcommand{\grad}{\nabla}
\newcommand{\dx}{{\mathrm d}x}
\newcommand{\dr}{{\mathrm d}r}
\newcommand{\ds}{{\mathrm d}s}
\newcommand{\dt}{{\mathrm d}t}
\newcommand{\dy}{{\mathrm d}y}
\newcommand{\dxi}{{\mathrm d}\xi}
\newcommand{\SR}{\mathscr{S}}
\newcommand{\TDR}{\mathscr{S^\prime}}
\newcommand{\FT}{\mathscr{F}}
\newcommand{\iFT}{\mathscr{F}^{-1}}
\newcommand{\riesztrans}{\mathfrak{R}}
\newcommand{\projsymbol}{\delta_\Z}
\newcommand{\Mmultiplier}{M}
\newcommand{\hankelone}[1]{H^{(1)}_{#1}}
\newcommand{\norm}[1]{\lVert#1\rVert}
\newcommand{\snorm}[1]{{\lvert #1 \rvert}}
\newcommand{\snorml}[1]{{\bigl\lvert #1 \big\rvert}}
\newcommand{\snormL}[1]{{\Bigl\lvert #1 \Big\rvert}}
\newcommand{\WSR}[2]{W^{#1,#2}}
\newcommand{\CR}[1]{C^{#1}}  
\newcommand{\LR}[1]{L^{#1}}
\newcommand{\LRloc}[1]{L^{#1}_{loc}} 
\newcommand{\CRi}{\CR \infty}
\newcommand{\CRci}{\CR \infty_0}
\newcommand{\LRper}[1]{L^{#1}_{\mathrm{per}}}
\newcommand{\WSRper}[2]{W^{#1,#2}_{\mathrm{per}}}
\newcommand{\CRciper}{\CR{\infty}_{0,\mathrm{per}}}
\newcommand{\vvel}{v}
\newcommand{\vpres}{p}
\newcommand{\wvel}{w}
\newcommand{\uvel}{u}
\newcommand{\upres}{\mathfrak{p}}
\newcommand{\fundsolstokes}{\varGamma_{\text{\tiny{Stokes}}}}
\newcommand{\fundsolstokesvel}{\varGamma^{\text{\tiny{S}}}}
\newcommand{\fundsolstokespres}{\gamma^{\text{\tiny{S}}}}
\newcommand{\fundsoltpstokes}{\varGamma_{\text{\tiny{TPStokes}}}}
\newcommand{\fundsoltpstokesvel}{\varGamma^{\text{\tiny{TPS}}}}
\newcommand{\fundsoltpstokespres}{\gamma^{\text{\tiny{TPS}}}}
\newcommand{\fundsolcompl}{\varGamma^\bot}
\newcommand{\fundsolssrk}{\varGamma^{k}_{\text{\tiny{SSR}}}}
\newcommand{\fundsollaplace}{\varGamma_{\text{\tiny{L}}}}
\newcommand{\tin}{\text{in }}
\newcommand{\half}{\frac{1}{2}}
\renewcommand{\epsilon}{\varepsilon}
\renewcommand{\phi}{\varphi}
\newcommand{\tay}{\calt}
\newcommand{\per}{\tay}
\newcommand{\perf}{\frac{2\pi}{\tay}}
\newcommand{\iperf}{\frac{\tay}{2\pi}}
\newcommand{\cutoff}{\chi}
\newcommand{\onedist}{1}
\newcommand{\newCCtr}[2][d]{
\newcounter{#2}\setcounter{#2}{0}
\expandafter\xdef\csname kyedtheconst#2\endcsname{#1}
}
\newcommand{\Cc}[2][nolabel]{
\stepcounter{#2}
\expandafter\ensuremath{\csname kyedtheconst#2\endcsname_{\arabic{#2}}}
\ifthenelse{\equal{#1}{nolabel}}
{}
{\expandafter\xdef\csname kyedconst#1\endcsname
{\expandafter\ensuremath{\csname kyedtheconst#2\endcsname_{\arabic{#2}}}}}
}
\newcommand{\Ccn}[2][nolabel]{
\expandafter\ensuremath{\csname kyedtheconst#2\endcsname}
\ifthenelse{\equal{#1}{nolabel}}
{}
{\expandafter\xdef\csname kyedconst#1\endcsname
{\expandafter\ensuremath{\csname kyedtheconst#2\endcsname}}}
}
\newcommand{\CcSetCtr}[2]{
\setcounter{#1}{#2}
}
\newcommand{\Cclast}[1]{
\expandafter\ensuremath{\csname kyedtheconst#1\endcsname_{\arabic{#1}}}
}
\newcommand{\Ccllast}[1]{
\addtocounter{#1}{-1}
\expandafter\ensuremath{\csname kyedtheconst#1\endcsname_{\arabic{#1}}}
\addtocounter{#1}{1}
}
\newcommand{\const}[1]{
\expandafter{\ifcsname kyedconst#1\endcsname
  \csname kyedconst#1\endcsname
\else
  \errmessage{Undefined Kyedconstant #1.}%
\fi}
}
\theoremstyle{plain}
\newtheorem{thm}{Theorem}[section]
\newtheorem{lem}[thm]{Lemma}
\theoremstyle{remark}
\newtheorem{rem}[thm]{Remark}
\begin{document}
\title{A fundamental solution to the time-periodic Stokes equations}

\author{
Mads Kyed\\ 
Fachbereich Mathematik\\
Technische Universit\"at Darmstadt\\
Schlossgartenstr. 7, 64289 Darmstadt, Germany\\
Email: \texttt{kyed@mathematik.tu-darmstadt.de}
}

\date{\today}
\maketitle

\begin{abstract}
The concept of a fundamental solution to the time-periodic Stokes equations in dimension $n\geq 2$ is introduced. A fundamental solution 
is then identified and analyzed. Integrability and pointwise estimates are established.
\end{abstract}

\noindent\textbf{MSC2010:} Primary 35Q30, 35B10, 35A08, 35E05, 76D07.\\
\noindent\textbf{Keywords:} Stokes equations, time-periodic, fundamental solution.

\newCCtr[C]{C}
\newCCtr[M]{M}
\newCCtr[B]{B}
\newCCtr[\epsilon]{eps}
\CcSetCtr{eps}{-1}

\section{Introduction}

Classically, fundamental solutions are defined for systems of linear partial differential equations in $\R^n$. Specifically, a fundamental solution to the Stokes
system ($n\geq 2$)
\begin{align}\label{intro_StokesRn}
\begin{pdeq}
&-\Delta\vvel + \grad\vpres = f && \tin\Rn, \\
&\Div\vvel =0 && \tin\Rn,
\end{pdeq}
\end{align} 
with unknowns $\vvel:\Rn\ra\Rn$, $\vpres:\Rn\ra\R$ and data $f:\Rn\ra\Rn$, is a tensor-field
\begin{align*}
\fundsolstokes:=
\begin{pmatrix}
\fundsolstokesvel_{11} & \ldots  & \fundsolstokesvel_{1n} \\
\vdots & \ddots & \vdots\\
\fundsolstokesvel_{n1} & \ldots  & \fundsolstokesvel_{nn} \\
\fundsolstokespres_{1} & \ldots  & \fundsolstokespres_{n} 
\end{pmatrix}
\in \TDR(\R^n)^{(n+1)\times n}
\end{align*}
that satisfies\footnote{We make use of the Einstein summation convention and
implicitly sum over all repeated indices.}
\begin{align}\label{intro_StokesFundEq}
\begin{pdeq}
&-\Delta\fundsolstokesvel_{ij} + \partial_i \fundsolstokespres_j = \delta_{ij}\delta_\Rn,  \\
&\partial_i\fundsolstokesvel_{ij} =0, 
\end{pdeq}
\end{align} 
where $\delta_{ij}$ and $\delta_\Rn$ denotes the Kronecker delta and delta distribution, respectively.
For arbitrary $f\in\SR(\R^n)^n$, a solution $(\vvel,\vpres)$ to \eqref{intro_StokesRn} is then given by the componentwise convolution 
\begin{align}\label{intro_ConvolutionWithStokesFundsol}
\begin{pmatrix}
\vvel \\
\vpres
\end{pmatrix} := \fundsolstokes * f,
\end{align}
which at the outset is well-defined in the sense of distributions. In the specific case of the Stokes fundamental solution $\fundsolstokes$ above, 
$\LR{q}$-integrability and pointwise decay estimates for $(\vvel,\vpres)$ can be established from \eqref{intro_ConvolutionWithStokesFundsol}. 
We refer to the standard literature such as \cite{GaldiBookNew} and \cite{VarnhornBook} for these well-known results.

The aim of this paper is to identify a fundamental solution to the \emph{time-periodic} Stokes system
\begin{align}\label{intro_TPStokesRn}
\begin{pdeq}
&\partial_t\uvel-\Delta\uvel + \grad\upres = f && \tin\Rn\times\R, \\
&\Div\uvel =0 && \tin\Rn\times\R,\\
&\uvel(x,t) = \uvel(x,t+\per)
\end{pdeq}
\end{align} 
with unknowns $\uvel:\Rn\times\R\ra\Rn$ and $\upres:\Rn\times\R\ra\R$ corresponding to time-periodic data $f:\Rn\times\R\ra\Rn$ with the same period, that is,
$f(x,t) = f(x,t+\per)$. Here $\per\in\R$ denotes the (fixed) time-period. Moreover, $x\in\R^n$ and $t\in\R$ denotes the spatial and time variable, respectively.
The main objective is to establish a framework which enables us to define and identify a fundamental solution $\fundsoltpstokes$ to \eqref{intro_TPStokesRn}
with the property that a solution $(\uvel,\upres)$ is given by a convolution
\begin{align}\label{intro_ConvolutionWithTPStokesFundsol}
\begin{pmatrix}
\uvel \\
\upres
\end{pmatrix} := \fundsoltpstokes * f. 
\end{align}
Having obtained this goal, we shall then examine to which extent regularity such as $\LR{q}$-integrability and pointwise estimates of the solution can be derived from \eqref{intro_ConvolutionWithTPStokesFundsol}.

Since time-periodic data $f:\Rn\times\R\ra\Rn,\ (x,t)\ra f(x,t)$ are non-decaying in $t$, a framework based on classical convolution in $\R^n\times\R$ cannot be applied. Instead, we reformulate \eqref{intro_TPStokesRn} as a system of partial differential equations 
on the locally compact abelian group $\grp:=\R^n\times\R/\per\Z$.
More specifically, we exploit that $\per$-time-periodic functions can naturally be identified with mappings on the 
torus group $\torus:=\R/\per\Z$ in the time variable $t$.
In the setting of the Schwartz-Bruhat space $\SR(\grp)$ and corresponding space of tempered distributions
$\TDR(\grp)$, we can then define a fundamental solution $\fundsoltpstokes$ to \eqref{intro_TPStokesRn} as a 
tensor-field
\begin{align}\label{intro_tpStokesFundForm}
\fundsoltpstokes:=
\begin{pmatrix}
\fundsoltpstokesvel_{11} & \ldots  & \fundsoltpstokesvel_{1n} \\
\vdots & \ddots & \vdots\\
\fundsoltpstokesvel_{n1} & \ldots  & \fundsoltpstokesvel_{nn} \\
\fundsoltpstokespres_{1} & \ldots  & \fundsoltpstokespres_{n} 
\end{pmatrix}
\in \TDR(\grp)^{(n+1)\times n}
\end{align}
that satisfies
\begin{align}\label{intro_tpStokesFundEq}
\begin{pdeq}
&\partial_t\fundsoltpstokesvel_{ij}-\Delta\fundsoltpstokesvel_{ij} + \partial_i \fundsoltpstokespres_j = \delta_{ij}\delta_\grp,  \\
&\partial_i\fundsoltpstokesvel_{ij} =0
\end{pdeq}
\end{align} 
in the sense of $\TDR(\grp)$-distributions. A solution to the time-periodic Stokes system \eqref{intro_TPStokesRn} is then given by \eqref{intro_ConvolutionWithTPStokesFundsol},
provided the convolution is taken over the group $\grp$. 

The aim in the following is to identify a tensor-field $\fundsoltpstokes\in\TDR(\grp)^{(n+1)\times n}$ 
satisfying \eqref{intro_tpStokesFundEq}.
We shall describe $\fundsoltpstokes$ as a sum of the steady-state Stokes
fundamental solution $\fundsolstokes$ and a remainder part satisfying remarkably good integrability and pointwise decay estimates.
It is well-known that the components of the velocity part $\fundsolstokesvel\in\TDR(\R^n)^{n\times n}$ and pressure part $\fundsolstokespres\in\TDR(\R^n)^n$ of $\fundsolstokes$ are 
functions
\begin{align*}
&\fundsolstokesvel_{ij}(x):= 
\begin{pdeq}
&\frac{1}{2\omega_n}\Bp{\delta_{ij}\log\bp{\snorm{x}^{-1}}+\frac{x_ix_j}{\snorm{x}^2} } && \text{if }n=2, \\
&\frac{1}{2\omega_n}\Bp{\delta_{ij}\frac{1}{n-2}\snorm{x}^{2-n}+\frac{x_ix_j}{\snorm{x}^n}} && \text{if }n\geq 3, 
\end{pdeq}\\
&\fundsolstokespres_{i}(x):= 
\frac{1}{\omega_n}\frac{x_i}{\snorm{x}^n},
\end{align*}
respectively; see for example \cite[IV.2]{GaldiBookNew}. Here, $\omega_n$ denotes the surface area of the $(n-1)$-dimensional unit sphere in $\Rn$. 
Our main theorem reads:

\begin{thm}\label{mainthm_IdandEstOfFundsolThm}
Let $n\geq 2$. There is a fundamental solution $\fundsoltpstokes\in\TDR(\grp)^{(n+1)\times n}$ to the time-periodic Stokes equations \eqref{intro_TPStokesRn} on the form \eqref{intro_tpStokesFundForm} satisfying
\eqref{intro_tpStokesFundEq} and 
\begin{align}
&\fundsoltpstokesvel = \fundsolstokesvel\otimes \onedist_{\torus} + \fundsolcompl,\label{mainthm_IdandEstOfFundsolThm_decompVel}\\
&\fundsoltpstokespres = \fundsolstokespres \otimes \delta_{\torus}\label{mainthm_IdandEstOfFundsolThm_decompPres}
\end{align}
with $\fundsolcompl\in\TDR(\grp)^{n\times n}$ satisfying 
\begin{align}
&\forall q\in\Bp{1,\frac{n}{n-1}}:\quad \fundsolcompl\in\LR{q}(\grp)^{n\times n},\label{mainthm_IdandEstOfFundsolThm_ComplSummability}\\
&\forall r\in [1,\infty)\ \forall\epsilon>0\ \exists\Ccn{C}>0\ \forall \snorm{x}\geq \epsilon:\  \norm{\fundsolcompl(x,\cdot)}_{\LR{r}(\torus)} \leq \frac{\Ccn{C}}{\snorm{x}^n},\label{mainthm_IdandEstOfFundsolThm_ComplPointwiseEst}\\
&\forall q\in(1,\infty)\ \exists \Ccn{C}>0\ \forall f\in\SR(\grp)^n:\  \norm{\fundsolcompl*f}_{\WSR{2,1}{q}(\grp)} \leq \Ccn{C}\,\norm{f}_{\LR{q}(\grp)},  \label{mainthm_IdandEstOfFundsolThm_ConvlFundsolcomplLqEst}
\end{align}
where $\torus$ denotes the torus group $\torus:=\R/\per\Z$, $\onedist_\torus\in\TDR(\torus)$ the constant $1$, $\delta_\torus\in\TDR(\torus)$ the Dirac delta distribution on $\torus$, $*$ the convolution on $\grp$, and $\WSR{2,1}{q}(\grp)$ the Sobolev space of order $2$ in $x$ and order $1$ in $t$. 
\end{thm}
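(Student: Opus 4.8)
The plan is to work on the Fourier side over the group $\grp=\R^n\times\torus$, whose dual is $\R^n\times\Z$. Applying the Fourier transform $\FT_\grp$ to \eqref{intro_tpStokesFundEq} turns the PDE system into an algebraic system for each frequency $(\xi,k)\in\R^n\times\Z$: writing $\widehat{\fundsoltpstokesvel}_{jl}$ and $\widehat{\fundsoltpstokespres}_l$ for the transformed components, one gets
\begin{align*}
\bp{\perf i k + 4\pi^2\snorm{\xi}^2}\widehat{\fundsoltpstokesvel}_{jl} + 2\pi i \xi_j \widehat{\fundsoltpstokespres}_l = \delta_{jl},\qquad \xi_j\widehat{\fundsoltpstokesvel}_{jl}=0.
\end{align*}
Contracting the first equation with $\xi_j$ and using the divergence condition yields $\widehat{\fundsoltpstokespres}_l = -\frac{i\xi_l}{2\pi\snorm{\xi}^2}$, which is exactly the Fourier symbol of $\fundsolstokespres_l\otimes\delta_\torus$; this gives \eqref{mainthm_IdandEstOfFundsolThm_decompPres} immediately. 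Substituting back, the velocity symbol is
\begin{align*}
\widehat{\fundsoltpstokesvel}_{jl}(\xi,k) = \frac{1}{\perf i k + 4\pi^2\snorm{\xi}^2}\Bp{\delta_{jl} - \frac{\xi_j\xi_l}{\snorm{\xi}^2}}.
\end{align*}
I would then split this symbol at $k=0$: the $k=0$ term is the Fourier symbol of $\fundsolstokesvel_{jl}\otimes\onedist_\torus$ (the steady Stokes fundamental solution, tensored with the projection onto the time-average), and the remainder $\fundsolcompl$ is defined as the $\grp$-distribution whose Fourier transform is the restriction of the above symbol to $k\in\Z\setminus\set{0}$. This defines $\fundsolcompl\in\TDR(\grp)^{n\times n}$ and establishes \eqref{intro_tpStokesFundEq} and \eqref{mainthm_IdandEstOfFundsolThm_decompVel}.

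For the three estimates I would treat $\fundsolcompl$ through its symbol $m_{jl}(\xi,k) = \bp{\perf i k + 4\pi^2\snorm{\xi}^2}^{-1}\bp{\delta_{jl}-\xi_j\xi_l\snorm{\xi}^{-2}}\np{1-\projsymbol(k)}$, where $\projsymbol$ is the indicator of $k=0$. The maximal-regularity bound \eqref{mainthm_IdandEstOfFundsolThm_ConvlFundsolcomplLqEst} is the cleanest: the symbols $\perf ik\cdot m_{jl}$ and $\xi_r\xi_s\cdot m_{jl}$ are (for $k\neq 0$) bounded, and one checks they satisfy the Marcinkiewicz-type condition on $\R^n\times\Z$ required by the transference principle / de~Leeuw-type multiplier theorem for the group $\grp$; the second-order spatial and first-order temporal derivatives of $\fundsolcompl*f$ are thus controlled in $\LR{q}(\grp)$ by $\norm{f}_{\LR{q}(\grp)}$, and the zeroth and first spatial orders follow by interpolation or by a companion multiplier estimate. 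For the pointwise and integrability estimates I would instead represent $\fundsolcompl(x,t)$ explicitly: summing the inverse Fourier transform (in $\xi$) of the heat-type kernel over $k\neq 0$, one obtains
\begin{align*}
\fundsolcompl(x,t) = \sum_{k\in\Z\setminus\set{0}} \e^{\perf ikt}\, G_k(x),\qquad G_k = \iFT_{\R^n}\Bb{\frac{1}{\perf ik+4\pi^2\snorm{\xi}^2}\Bp{\delta_{jl}-\frac{\xi_j\xi_l}{\snorm{\xi}^2}}},
\end{align*}
and each $G_k$ is, up to the Helmholtz-type projection, essentially the resolvent kernel $\bp{\perf ik-\Delta}^{-1}$ in $\R^n$, i.e.\ a Bessel-potential kernel with a complex parameter whose modulus grows like $\snorm{k}$. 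Using the classical exponential decay and smoothing of that kernel (decay factor $\e^{-c\sqrt{\snorm{k}}\,\snorm{x}}$ away from the origin, and $\snorm{k}$-dependent singularity bounds near the origin), I would sum over $k$ to get, for $\snorm{x}\geq\epsilon$, a bound on $\norm{\fundsolcompl(x,\cdot)}_{\LR{r}(\torus)}$; the summation over $k$ converges because of the exponential-in-$\sqrt{\snorm k}$ gain, and a scaling/homogeneity count shows the resulting spatial decay is exactly $\snorm{x}^{-n}$, giving \eqref{mainthm_IdandEstOfFundsolThm_ComplPointwiseEst}. For \eqref{mainthm_IdandEstOfFundsolThm_ComplSummability}, near the spatial origin the leading singularity of $\fundsolcompl$ is that of $G_0$'s complement, which behaves like $\snorm{x}^{2-n}$ (or $\log$ in $n=2$), locally in $\LR{q}$ for $q<n/(n-1)$ after accounting for the pressure-type term; combined with the $\snorm{x}^{-n}$ decay at infinity (which is integrable over $\snorm{x}\geq 1$ against $\LR{r}(\torus)$ for $q<n/(n-1)$ as well, since $n\cdot q > n$... more precisely one checks $\int_{\snorm x\geq 1}\snorm{x}^{-nq}\,\dx<\infty$ for $q>1$), one gets $\fundsolcompl\in\LR{q}(\grp)^{n\times n}$ on the stated range.

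The main obstacle I expect is making the kernel estimates for $G_k$ uniform and summable in $k$: one needs sharp control of the complex-parameter Bessel/resolvent kernel $\bp{i\mu-\Delta}^{-1}$ in $\R^n$ — both its singularity at $x=0$ (with the right power of $\mu$) and its exponential decay rate $\realpart\sqrt{-i\mu}=\sqrt{\snorm\mu/2}$ — in a form that survives summation over $\mu=\perf k$, $k\in\Z\setminus\set0$, and that combines correctly with the Helmholtz projection $\delta_{jl}-\xi_j\xi_l\snorm{\xi}^{-2}$, which itself contributes a nonlocal, homogeneous-degree-zero operator. A secondary technical point is justifying the multiplier theorem on the non-compact, non-discrete group $\grp=\R^n\times\torus$: one either invokes a transference result reducing $\R^n\times\Z$-multipliers to $\R^n\times\R$-multipliers (à la de~Leeuw) or verifies a discrete Marcinkiewicz condition directly, and one must check the symbol $m_{jl}$, despite the cutoff at $k=0$, still satisfies the mixed-derivative bounds (the cutoff is harmless since it only removes a single frequency slice and the remaining symbol is smooth and bounded with all required derivative bounds on $\R^n\times\np{\Z\setminus\set0}$).
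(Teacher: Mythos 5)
Your skeleton is the same as the paper's: remove the $k=0$ mode from the symbol $\bp{\snorm{\xi}^2+i\perf k}^{-1}\bp{\idmatrix-\xi\otimes\xi/\snorm{\xi}^2}$ to define $\fundsolcompl$, take $\fundsoltpstokespres=\fundsolstokespres\otimes\delta_\torus$, prove \eqref{mainthm_IdandEstOfFundsolThm_ConvlFundsolcomplLqEst} by a multiplier/transference argument, and attack \eqref{mainthm_IdandEstOfFundsolThm_ComplSummability}--\eqref{mainthm_IdandEstOfFundsolThm_ComplPointwiseEst} through the resolvent kernels $\fundsolssrk=\iFT_{\R^n}\bb{(\snorm{\xi}^2+i\perf k)^{-1}}$ summed over $k\neq 0$. (One small structural caveat: for $n=2$ the $k=0$ slice $\snorm{\xi}^{-2}\bp{\idmatrix-\xi\otimes\xi/\snorm{\xi}^2}$ is not locally integrable near $\xi=0$, so ``splitting the symbol at $k=0$'' is not literally meaningful; the clean route is to define only $\fundsolcompl$ by the bounded multiplier $\bp{1-\projsymbol(k)}\Mmultiplier$, set $\fundsoltpstokesvel:=\fundsolstokesvel\otimes\onedist_\torus+\fundsolcompl$, and verify \eqref{intro_tpStokesFundEq} directly.) The real issue is that the two kernel estimates, which are the substance of the theorem, are left with genuine gaps rather than proofs.

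For \eqref{mainthm_IdandEstOfFundsolThm_ComplPointwiseEst} you argue that the sum over $k$ converges ``because of the exponential-in-$\sqrt{\snorm{k}}$ gain'' and that scaling then yields $\snorm{x}^{-n}$. But the exponential decay $\e^{-c\sqrt{\snorm{k}}\snorm{x}}$ belongs to the pure resolvent kernel only; once the Helmholtz projection is composed in (and it must be, since it is part of your $G_k$), the kernel decays merely algebraically, and its decay in $k$ is only $O(\snorm{k}^{-1})$, which is not absolutely summable. The device you are missing is to realize the projected kernel as $\bb{\delta_{ij}\partial_h\partial_h-\partial_i\partial_j}\nb{\fundsollaplace*\fundsolssrk}$ and to prove, by splitting the convolution integral into near, middle and far regions, the bound $\snorml{\partial_i\partial_j\nb{\fundsollaplace*\fundsolssrk}(x)}\leq C\snorm{k}^{-1}\snorm{x}^{-n}$ for $\snorm{x}\geq\epsilon$ (Lemma \ref{pointwiseEstLem}); the sum over $k$ is then controlled by Hausdorff--Young on $\torus$ via $\sum_{k\neq 0}\snorm{k}^{-r^*}<\infty$, $r^*>1$ --- it is the factor $\snorm{k}^{-1}$, not any surviving exponential, that makes it converge. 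You name this interaction with the projection as your ``main obstacle'' but offer no mechanism, so \eqref{mainthm_IdandEstOfFundsolThm_ComplPointwiseEst} is not established. Similarly, for \eqref{mainthm_IdandEstOfFundsolThm_ComplSummability} your description of the local singularity as $\snorm{x}^{2-n}$ is incorrect: for $t$ near $0$ the roughly $\snorm{x}^{-2}$ frequencies with $\snorm{k}\lesssim\snorm{x}^{-2}$ each contribute $\sim\snorm{x}^{2-n}$, producing a singularity of order $\snorm{x}^{-n}$; the usable statement is the square-function bound $\bp{\sum_{k\neq0}\snorml{\fundsolssrk(x)}^2}^{1/2}\leq C\snorm{x}^{1-n}\e^{-c\snorm{x}}$ (Lemma \ref{l2estFundsolssrk}), and it is precisely this $\snorm{x}^{1-n}$ that dictates the range $q<\frac{n}{n-1}$. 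Moreover, since the projection is nonlocal you cannot push pointwise bounds through it; the argument that closes this step is Parseval in $t$ together with the $\LR{q}(\R^n)$-boundedness of the Riesz transforms $\riesztrans_i\circ\riesztrans_j$ applied to $\iFT_\torus\bb{(1-\projsymbol(k))\fundsolssrk}$. These two devices are absent from your plan, and without them neither estimate follows.
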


\begin{rem}
We shall briefly demonstrate how the fundamental solution \eqref{mainthm_IdandEstOfFundsolThm_decompVel}--\eqref{mainthm_IdandEstOfFundsolThm_decompPres}
can be applied in a more classical setting of the time-periodic Stokes equations to obtain a representation formula, integrability properties and decay estimates of 
a solution. 
The time-periodic Stokes equations are typically studied in a function analytical framework based on the function space 
\begin{align*}
\CRciper(\R^n\times\R) := \setc{f\in\CRi(\R^n\times \R)}{f(x,t+\per)=f(x,t)\ \wedge\ f\in\CRci\bp{\R^n\times[0,\per]}},
\end{align*}
upon which $\norm{f}_q:=\norm{f}_{\LR{q}(\Rn\times[0,\per])}$ is a norm.
Time-periodic Lebesgue and Sobolev spaces are defined as 
\begin{align*}
&\LRper{q}(\Rn\times\R):= \closure{\CRciper(\R^n\times\R)}{\norm{\cdot}_{q}},\\
&\WSRper{2,1}{q}(\Rn\times\R) := \closure{\CRciper(\R^n\times\R)}{\norm{\cdot}_{2,1,q}},\quad 
\norm{f}_{2,1,q}:=\Bp{\sum_{\snorm{\alpha}\leq 2} \norm{\partial_x^\alpha f}_{q}^q + \sum_{\snorm{\beta}\leq 1} \norm{\partial_t^\beta f}_{q}^q}^{\frac{1}{q}}.
\end{align*}
It is easy to see that $\LRper{q}(\Rn\times\R)$ and $\WSRper{2,1}{q}(\Rn\times\R)$ are isometrically isomorphic to 
$\LR{q}(\grp)$ and $\WSR{2,1}{q}(\grp)$, respectively. Regarding $\fundsolcompl$ as a tensor-field in $\LRper{q}(\Rn\times\R)$, we obtain 
by Theorem \ref{mainthm_IdandEstOfFundsolThm} for any sufficiently smooth vector-field $f$, say $f\in\CRciper(\R^n\times\R)^n$, a solution $(\uvel,\upres)$
to \eqref{intro_TPStokesRn} given by $\uvel:=\uvel_1+\uvel_2$ with 
\begin{align}\label{repformular_classical}
\begin{aligned}
&\uvel_1:= \Bb{\fundsolstokesvel *_{\Rn }\Bp{\frac{1}{\per}\int_{0}^\per f(\cdot,s)\,\ds}}(x,t),\\
&\uvel_2:= \int_\Rn \frac{1}{\per}\int_0^\per \fundsolcompl(x-y,t-s)\,f(y,s)\,\ds\dy
\end{aligned}
\end{align}
and $\upres(x,t) := \bb{\fundsolstokespres*_\Rn f(\cdot,t)}(x)$.  
Properties of $\uvel_1$ and $\upres$ can be derived directly from the Stokes fundamental solution $(\fundsolstokesvel,\fundsolstokespres)$,
which, given the simple structure of $(\fundsolstokesvel,\fundsolstokespres)$, is elementary and can be found in standard literature such as \cite{GaldiBookNew} and
\cite{VarnhornBook}.
To fully understand the structure of a time-periodic solution, it therefore remains to investigate $\uvel_2$. For this purpose,
\eqref{mainthm_IdandEstOfFundsolThm_ComplSummability}--\eqref{mainthm_IdandEstOfFundsolThm_ConvlFundsolcomplLqEst} of Theorem \ref{mainthm_IdandEstOfFundsolThm} is useful. For example, \eqref{mainthm_IdandEstOfFundsolThm_ConvlFundsolcomplLqEst} yields
integrability $\uvel_2\in\WSRper{2,1}{q}(\Rn\times\R)$, and from \eqref{mainthm_IdandEstOfFundsolThm_ComplPointwiseEst} the pointwise decay estimate
$\snorm{\uvel_2(x,t)}\leq\Ccn{C} \snorm{x}^{-n}$ can be derived for large values of $x$.
\end{rem}

\begin{rem}\label{remOnAsymp}
Theorem \ref{mainthm_IdandEstOfFundsolThm} implies that $\fundsolcompl$ decays faster than $\fundsolstokesvel$ as $\snorm{x}\ra\infty$; both in terms of summability \eqref{mainthm_IdandEstOfFundsolThm_ComplSummability} and pointwise \eqref{mainthm_IdandEstOfFundsolThm_ComplPointwiseEst}.
This information provides us with a valuable insight into the asymptotic structure as $\snorm{x}\ra\infty$ of a time-periodic solution to the Stokes equations.
More precisely, from the representation formula $\uvel=\uvel_1+\uvel_2$ 
with $\uvel_1$ and $\uvel_2$ given by \eqref{repformular_classical}, 
and the fact that $\fundsolcompl$ decays faster than $\fundsolstokesvel$ as $\snorm{x}\ra\infty$,
it follows that the leading term in an asymptotic expansion of $\uvel$ coincides with 
the leading term in the expansion of $\uvel_1$. 
Since $\uvel_1$ is a solution to a steady-state Stokes problem, it is well-known how to identify its leading term. 
In conclusion, Theorem \ref{mainthm_IdandEstOfFundsolThm} tells us that 
time-periodic solutions to the Stokes equations essentially have the same well-known asymptotic structure as $\snorm{x}\ra\infty$ as steady-state solutions---a nontrivial fact, which is not clear at the outset.
\end{rem}

The Stokes system is a linearization of the nonlinear Navier-Stokes system. A fundamental solution to the time-periodic Stokes equations 
can therefore be used to develop a linear theory for the time-periodic Navier-Stokes problem.
The study of the time-periodic Navier-Stokes equations was initiated by \textsc{Serrin} \cite{Serrin_PeriodicSolutionsNS1959},
\textsc{Prodi} \cite{Prodi1960}, and \textsc{Yudovich} \cite{Yudovich60}. Since then, a number of papers have appeared based on the 
techniques proposed by these authors. The methods all have in common that the time-periodic problem is investigated in a setting of the corresponding initial-value problem, and time-periodicity of a solution only established a posterior. With an appropriate time-periodic linear theory, a more direct approach to the time-periodic Navier-Stokes problem can be developed, which may reveal more information on the solutions. The asymptotic structure mentioned in Remark \ref{remOnAsymp} is but one example.

\section{Preliminaries}

Points in $\R^n\times\R$ are denoted by $(x,t)$ with $x\in\Rn$ and $t\in\R$.
We refer to $x$ as the spatial and to $t$ as the time variable. 

We denote by $\B_R:=\B_R(0)$ balls in $\Rn$ centered at $0$. Moreover, we let $\B_{R,r}:=\B_R\setminus\overline{\B_r}$ and $\B^R:=\Rn\setminus\overline{\B_r}$

For a sufficiently regular function $u:\Rn\times\R\ra\R$, we put $\partial_i u:=\partial_{x_i} u$.
The differential operators $\Delta$, $\grad$ and $\Div$ act only in the spatial variables. For example, 
$\Div u:=\sum_{j=1}^n\partial_j u_j$ denotes the divergence of $u$ with respect to the $x$ variables.

We let $\grp$ denote the group $\grp:=\R^n\times\torus$, with $\torus$ denoting the torus group $\torus:=\R/\per\Z$. 
$\grp$ is equipped with the quotient topology and differentiable structure inherited from $\Rn\times\R$ via the quotient mapping
$\quotientmap:\Rn\times\R\ra\grp$, $\quotientmap(x,t):=\bp{x,[t]}$.
Clearly, $\grp$ is a locally compact abelian group with Haar measure given by the product of the Lebesgue measure $\dx$ on $\Rn$ and the (normalized) Haar measure $\dt$ on $\torus$. 
We implicitly identify $\torus$ with the interval $[0,\per)$, whence the (normalized) Haar measure on $\torus$ is determined by
\begin{align*}
\forall f\in\CR{}(\torus):\quad \int_\torus f\,\dt := \frac{1}{\per}\int_0^\per f(t)\,\dt.
\end{align*}
We identify the dual group $\dualgrp$ with $\Rn\times\Z$ and denote points in $\dualgrp$ by $(\xi,k)$. 

We denote by $\SR(\grp)$ the Schwartz-Bruhat space of generalized Schwartz functions; see \cite{Bruhat61}. By 
$\TDR(\grp)$ we denote the corresponding space of tempered distributions. The Fourier transform on $\grp$ and its inverse takes the form 
\begin{align*}
&\FT_\grp:\SR(\grp)\ra\SR(\dualgrp),\quad \FT_\grp\nb{\uvel}(\xi,k):=
\int_{\R^n}\int_\torus \uvel(x,t)\,\e^{-ix\cdot\xi-ik\perf t}\,\dt\dx,\\
&\iFT_\grp:\SR(\dualgrp)\ra\SR(\grp),\quad \iFT\nb{\wvel}(x,t):=
\sum_{k\in\Z}\,\int_{\R^n} \wvel(\xi,k)\,\e^{ix\cdot\xi+ik\perf t}\,\dxi,
\end{align*}
respectively, provided the Lebesgue measure $\dxi$ is normalized appropriately. By duality, $\FT_\grp$ extends to a homeomorphism $\FT_\grp:\TDR(\grp)\ra\TDR(\dualgrp)$. Observe that $\FT_\grp=\FT_\Rn\circ\FT_\torus$.

We denote by $\delta_\Rn$, $\delta_\torus$, $\delta_\Z$ the Dirac delta distribution on $\Rn$, $\torus$ and $\Z$, respectively.
Observe that $\delta_\Z$ is a function with $\delta_\Z(k)=1$ if $k=0$ and $\delta_\Z(k)=0$ otherwise. Also note that 
$\FT_\torus\nb{1_\torus}=\delta_\Z$.

Given a tensor $\Gamma\in\TDR(\grp)^{n\times m}$, we define the convolution of $\Gamma$ with vector field $f\in\SR(\grp)^m$ as 
the vector field $\Gamma * f \in\TDR(\grp)^n$ with $\nb{\Gamma*f}_{i} := \Gamma_{ij}*f_j$.

The $\LR{q}(\grp)$-spaces with norm $\norm{\cdot}_q$ are defined in the usual way via the Haar measure $\dx\dt$ on $\grp$. We further introduce 
the Sobolev space
\begin{align*}
&\WSR{2,1}{q}(\grp) := \closure{\CRci(\grp)}{\norm{\cdot}_{2,1,q}},\quad 
\norm{f}_{2,1,q}:=\Bp{\sum_{\snorm{\alpha}\leq 2} \norm{\partial_x^\alpha f}_{q}^q + \sum_{\snorm{\beta}\leq 1} \norm{\partial_t^\beta f}_{q}^q}^{\frac{1}{q}},
\end{align*}
where $\CRci(\grp)$ denotes the space of smooth functions of compact support on $\grp$.

We emphasize at this point that a framework based on $\grp$ is a natural setting for the time-period Stokes equations. It it easy to see 
that lifting by the restriction $\quotientmap_{|\R^n\times[0,\per)}$ of the quotient mapping provides us with an equivalence between the time-periodic 
Stokes problem \eqref{intro_TPStokesRn} and the system
\begin{align*}
\begin{pdeq}
&\partial_t\uvel-\Delta\uvel + \grad\upres = f && \tin\grp, \\
&\Div\uvel =0 && \tin\grp.
\end{pdeq}
\end{align*}
An immediate advantage obtained by writing the time-periodic Stokes problem as system of equations on $\grp$ is the ability to then apply the Fourier transform $\FT_\grp$ and re-write the problem in terms of Fourier symbols. We shall take advantage of this possibility in the proof of the main theorem below.  

We use the symbol $C$ for all constants. In particular, $C$ may represent different constants in the scope of a proof.

\section{Proof of main theorem}

\begin{proof}[Proof of Theorem \ref{mainthm_IdandEstOfFundsolThm}]\newCCtr[c]{proofmainthmConst}

Put 
\begin{align}\label{proofmainthm_defoffundsolcompl}
\fundsolcompl := \iFT_\grp\Bb{\frac{1-\projsymbol(k)}{\snorm{\xi}^2+i\perf k}\Bp{\idmatrix-\frac{\xi\otimes\xi}{\snorm{\xi}^2}}},
\end{align}
where $\idmatrix\in\R^{n\times n}$ denotes the identity matrix.
Since 
\begin{align}\label{proofmainthm_defofmultiplier}
\Mmultiplier:\dualgrp\ra\C,\quad \Mmultiplier(\xi,k):=\frac{1-\projsymbol(k)}{\snorm{\xi}^2+i\perf k}
\end{align}
is bounded, that is, $\Mmultiplier\in\LR{\infty}(\dualgrp)$, the inverse Fourier transform in \eqref{proofmainthm_defoffundsolcompl} 
is well-defined as a distribution in $\TDR(\grp)^{n\times n}$. 
Now define $\fundsoltpstokesvel$ and $\fundsoltpstokespres$ as in \eqref{mainthm_IdandEstOfFundsolThm_decompVel} and \eqref{mainthm_IdandEstOfFundsolThm_decompPres}. 
It is then easy to verify that $(\fundsoltpstokesvel,\fundsoltpstokespres)$ is a solution to \eqref{intro_tpStokesFundEq}. 

It remains to 
show \eqref{mainthm_IdandEstOfFundsolThm_ComplSummability}--\eqref{mainthm_IdandEstOfFundsolThm_ConvlFundsolcomplLqEst}.
For this purpose, we introduce for $k\in\Z\setminus\set{0}$ the function
\begin{align}\label{defofFundsolSSRk}
\fundsolssrk:\R^n\setminus\set{0}\ra\C,\quad \fundsolssrk(x) := \frac{i}{4} \BBp{\frac{\sqrt{-i\perf k}}{2\pi \snorm{x}}\,}^{\frac{n-2}{2}}\hankelone{\frac{n}{2}-1}
\Bp{\sqrt{-i\perf k}\cdot \snorm{x}},
\end{align}
where $\hankelone{\alpha}$ denotes the Hankel function of the first kind, and $\sqrt{z}$ the square root of $z$ with \emph{positive} imaginary part. As one readily verifies, 
$\fundsolssrk$ is a fundamental solution to the Helmholtz equation
\begin{align}\label{HelmholtzEqFundsolEq}
\Bp{-\Delta + i\perf k} \fundsolssrk = \delta_{\R^n}\quad \tin\R^n.
\end{align}
Clearly, $\fundsolssrk\in\TDR(\R^n)$. Moreover, its Fourier transform is given by the function
\begin{align}\label{fundsolssrkFT}
\FT_{\R^n}\bb{\fundsolssrk}(\xi) = \frac{1}{\snorm{\xi}^2+i\perf k}.
\end{align}
From the estimates in Lemma \ref{l2estFundsolssrk} below, we see that
\begin{align*}
\int_{\R^n}\Bp{\sum_{k\in\Z\setminus\set{0}} \snormL{{\fundsolssrk}}^2}^\frac{q}{2}\dx < \infty
\end{align*}
for $q\in\bp{1,\frac{n}{n-1}}$. By H\"older's inequality and Parseval's theorem, we thus deduce
\begin{align*}
&\int_\torus \int_{\R^n} \snorml{\iFT_{\torus}\bb{\bp{1-\projsymbol(k)}\cdot\fundsolssrk(x)}(t)}^q\,\dx\dt\\ 
&\qquad\qquad\leq \Ccn{C}\, \int_{\R^n}\Bp{\int_\torus  \snormL{\iFT_{\torus}\bb{\bp{1-\projsymbol(k)}\cdot\fundsolssrk}}^2\,\dt}^\frac{q}{2}\dx\\
&\qquad\qquad\leq \Ccn{C}\, \int_{\R^n}\Bp{\sum_{k\in\Z\setminus\set{0}} \snormL{{\fundsolssrk}}^2}^\frac{q}{2}\dx<\infty.
\end{align*}
It is well-known that the Riesz transform $\riesztrans_k(f):=\iFT_{\R^n}\bb{\frac{\xi_k}{\snorm{\xi}}\cdot\FT_{\R^n}\nb{f}}$ is bounded on $\LR{q}(\R^n)$ for all $q\in (1,\infty)$. Consequently, we obtain 
$\riesztrans_i\circ \riesztrans_j \bp{\iFT_{\torus}\bb{\bp{1-\projsymbol(k)}\cdot\fundsolssrk}}\in\LR{q}(\grp)$ for $q\in\bp{1,\frac{n}{n-1}}$. Recalling \eqref{fundsolssrkFT}, we compute
\begin{align*}
\bb{\delta_{ij}\riesztrans_h\circ\riesztrans_h-\riesztrans_i\circ\riesztrans_j} \bp{\iFT_{\torus}\bb{\bp{1-\projsymbol(k)}\cdot{\fundsolssrk}}}
=\fundsolcompl_{ij}
\end{align*}
and conclude \eqref{mainthm_IdandEstOfFundsolThm_ComplSummability}.

In order to show \eqref{mainthm_IdandEstOfFundsolThm_ComplPointwiseEst}, we further introduce
\begin{align*}
\fundsollaplace:\R^n\setminus\set{0} \ra \C,\quad \fundsollaplace:=
\begin{pdeq}
&-\frac{1}{2\pi} \log\snorm{x} &&(n=2),\\
&\frac{1}{(n-2)\omega_n} \snorm{x}^{2-n} &&(n>2),
\end{pdeq}
\end{align*}
which is the fundamental solution to the Laplace equation $\Delta\fundsollaplace = \delta_{\R^n}$ in $\R^n$.
As one may verify directly from the pointwise definitions of $\fundsolssrk$ and $\fundsollaplace$, the 
convolution integral 
\begin{align}\label{defoffundsollaplaceCONVLfundsolssrk}
\int_{\R^n} \fundsollaplace(x-y)\,\fundsolssrk(y)\,\dy =: \fundsollaplace*\fundsolssrk (x)
\end{align}
exists for all $x\in\R^n\setminus\set{0}$. In fact, the function given by $\fundsollaplace*\fundsolssrk$ belongs to $\LRloc{1}(\R^n)$
and defines a tempered distribution in $\TDR(\R^n)$. One may further verify that also the second order derivatives of $\fundsollaplace*\fundsolssrk$ are given by convolution integrals
\begin{align}\label{fundsollaplaceCONVLfundsolssrk_secondorderderivatives}
\partial_i\partial_j \nb{\fundsollaplace*\fundsolssrk} (x) = \int_{\R^n} \partial_i\fundsollaplace(x-y)\,\partial_j\fundsolssrk(y)\,\dy, 
\end{align}
from which it follows that their Fourier transform are functions
\begin{align*}
\FT_{\R^n}\bb{\partial_i\partial_j \nb{\fundsollaplace*\fundsolssrk}}(\xi) = \frac{\xi_i\xi_j}{\snorm{\xi}^2}\frac{1}{\snorm{\xi}^2+i\perf k}.
\end{align*}
We infer from the expression above that 
\begin{align*}
\fundsolcompl_{ij} = \iFT_{\torus}\bb{\bp{1-\projsymbol(k)}\cdot \bb{\delta_{ij}\partial_h\partial_h-\partial_i\partial_j}\nb{\fundsollaplace*\fundsolssrk}}.
\end{align*}
Employing Hausdorff-Young's inequality in combination with the pointwise estimate from Lemma \ref{pointwiseEstLem} below, we obtain for $r\in[2,\infty)$
\begin{align*}
\norm{\fundsolcompl(x,\cdot)}_{\LR{r}(\torus)} &\leq \Bp{\sum_{k\in\Z} 
\snormL{\bp{1-\projsymbol(k)}\cdot \bb{\delta_{ij}\partial_h\partial_h-\partial_i\partial_j}\nb{\fundsollaplace*\fundsolssrk}(x)}^{r^*} }^{\frac{1}{r^*}}\\
&\leq \Ccn{C}\,\snorm{x}^{-n}\, \Bp{\sum_{k\in\Z\setminus\set{0}} \snorm{k}^{-r^*} }^{\frac{1}{r^*}} \leq \Ccn{C}\,\snorm{x}^{-n},
\end{align*}
which concludes \eqref{mainthm_IdandEstOfFundsolThm_ComplPointwiseEst}.

The convolution $\fundsolcompl*f$ can be expressed in terms of a Fourier multiplier
\begin{align*}
\fundsolcompl*f = \iFT_\grp\Bb{\Mmultiplier(\xi,k)\Bp{\idmatrix-\frac{\xi\otimes\xi}{\snorm{\xi}^2}} \FT_\grp\nb{f}},
\end{align*}
with $M$ given by \eqref{proofmainthm_defofmultiplier}. As already mentioned, $M\in\LR{\infty}(\dualgrp)$. As one may verify, also 
second order spatial derivatives $\partial_i\partial_j M\in\LR{\infty}(\dualgrp)$ and the time derivative $\partial_t M\in\LR{\infty}(\dualgrp)$ are bounded.
Based on this information, \eqref{mainthm_IdandEstOfFundsolThm_ConvlFundsolcomplLqEst} can be established. For the details of the argument, we refer
the reader to \cite[Proof of Theorem 4.8]{mrtpns}.
\end{proof}

\begin{lem}\label{l2estFundsolssrk}
The function $\fundsolssrk$ defined in \eqref{defofFundsolSSRk} satisfies
\begin{align}
\Bp{\sum_{k\in\Z\setminus\set{0}} \snorml{\fundsolssrk(x)}^2}^\frac{1}{2} \leq \Ccn{C} \, \snorm{x}^{1-n}\,\e^{-\half\sqrt{\frac{\pi}{\per}}\snorm{x}}.\label{l2estFundsolssrkngeq}
\end{align}
\end{lem}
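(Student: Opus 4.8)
The plan is to reduce \eqref{l2estFundsolssrkngeq} to classical pointwise bounds for the Hankel function $\hankelone{n/2-1}$, together with an elementary estimate for a series in $k$. Throughout, set $\mu:=\perf$ and $c_0:=\sqrt{\pi/\per}$, so that $\mu=2c_0^2$, and for $k\in\Z\setminus\set0$ put $w_k:=\sqrt{-i\mu k}\,\snorm x$. The only facts about the branch of the square root that will be used are that $\sqrt{-i\mu k}$ has modulus $\sqrt{\mu\snorm k}$, imaginary part $c_0\sqrt{\snorm k}$, and argument $\tfrac\pi4$ or $\tfrac{3\pi}4$ according to the sign of $k$; hence $\snorm{w_k}=\sqrt{\mu\snorm k}\,\snorm x$, $\impart w_k=c_0\sqrt{\snorm k}\,\snorm x>0$, and $\snorm{\arg w_k}\leq\tfrac{3\pi}4<\pi$.

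The first step is to record the classical asymptotics of $\hankelone\nu$: for fixed $\nu\geq0$ there is a constant such that $\snorm{\hankelone\nu(w)}\leq C\snorm w^{-\nu}$ for $0<\snorm w\leq1$, $\nu>0$; $\snorm{\hankelone0(w)}\leq C\bp{1+\snorml{\log\snorm w}}$ for $0<\snorm w\leq1$; and $\snorm{\hankelone\nu(w)}\leq C\snorm w^{-1/2}\e^{-\impart w}$ for $\snorm w\geq1$, $\snorm{\arg w}\leq\tfrac{3\pi}4$. Feeding these into \eqref{defofFundsolSSRk} with $\nu=\tfrac n2-1$ and using the three relations above yields, for $\sqrt{\mu\snorm k}\,\snorm x\leq1$, the bounds
\begin{align*}
\snorm{\fundsolssrk(x)}\leq C\snorm x^{2-n}\quad(n\geq3),\qquad
\snorm{\fundsolssrk(x)}\leq C\bp{1+\snorml{\log\bp{\sqrt{\mu\snorm k}\,\snorm x}}}\quad(n=2),
\end{align*}
and, for $\sqrt{\mu\snorm k}\,\snorm x\geq1$, the bound $\snorm{\fundsolssrk(x)}\leq C\,(\mu\snorm k)^{\frac{n-3}4}\snorm x^{-\frac{n-1}2}\e^{-c_0\sqrt{\snorm k}\,\snorm x}$, with $C=C(n,\per)$ in all cases. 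Additionally I would use the elementary estimate $\sum_{k\geq1}(\mu k)^{\frac{n-3}2}\e^{-b\sqrt k}\leq C\,b^{-(n-1)}$ (valid for all $b>0$, $C=C(n,\per)$), obtained by comparison with $\int_0^\infty(\mu s)^{\frac{n-3}2}\e^{-b\sqrt s}\,\ds$, which the substitution $s=u^2$ evaluates as a constant multiple of $b^{-(n-1)}$.

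The second step is to fix $x\neq0$, put $K:=(\mu\snorm x^2)^{-1}$, and split the sum at $\snorm{w_k}=1$, i.e.\ at $\snorm k=K$. On $\setc k{\snorm k>K}$, the third pointwise bound together with $2\sqrt k\geq1+\sqrt k$ ($k\geq1$) gives $\e^{-2c_0\sqrt k\,\snorm x}\leq\e^{-c_0\snorm x}\e^{-c_0\sqrt k\,\snorm x}$, so that, by the elementary series estimate with $b=c_0\snorm x$ (the two signs of $k$ contributing equally up to the constant),
\begin{align*}
\sum_{\snorm k>K}\snorm{\fundsolssrk(x)}^2
\leq C\snorm x^{-(n-1)}\e^{-c_0\snorm x}\sum_{k\geq1}(\mu k)^{\frac{n-3}2}\e^{-c_0\sqrt k\,\snorm x}
\leq C\snorm x^{2-2n}\e^{-c_0\snorm x}
\end{align*}
for every $x\neq0$. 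The set $\setc k{0<\snorm k\leq K}$ is nonempty only when $\snorm x\leq1/\sqrt\mu$; in that case, for $n\geq3$ the first bound is uniform in $k$, so this part of the sum is at most $\#\setc k{0<\snorm k\leq K}\cdot C\snorm x^{4-2n}\leq C\snorm x^{-2}\snorm x^{4-2n}=C\snorm x^{2-2n}$, while for $n=2$ one writes $\sqrt{\mu\snorm k}\,\snorm x=\sqrt{\snorm k/K}$ and compares the resulting sum with $2K\int_0^1\bp{1+\snorml{\log\sqrt t}}^2\dt$, which is $\leq CK=C\snorm x^{-2}=C\snorm x^{2-2n}$ since the integral is finite. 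As $\e^{-c_0\snorm x}\geq\e^{-c_0/\sqrt\mu}$ on $\snorm x\leq1/\sqrt\mu$, this part is likewise $\leq C\snorm x^{2-2n}\e^{-c_0\snorm x}$. Adding the two contributions gives $\sum_{k\in\Z\setminus\set0}\snorm{\fundsolssrk(x)}^2\leq C\snorm x^{2-2n}\e^{-c_0\snorm x}$; taking square roots and recalling $c_0=\sqrt{\pi/\per}$ proves \eqref{l2estFundsolssrkngeq}.

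The one genuinely delicate point is the $n=2$ contribution from $\setc k{0<\snorm k\leq K}$: a naive absolute-value bound $\snorm{\fundsolssrk(x)}\lesssim1+\snorml{\log(\sqrt{\mu\snorm k}\,\snorm x)}$ appears to lose a factor $\log^2(1/\snorm x)$ after summing over the roughly $(\mu\snorm x^2)^{-1}$ relevant $k$; the remedy is precisely the substitution $\sqrt{\mu\snorm k}\,\snorm x=\sqrt{\snorm k/K}$, which shows $\log(\snorm k/K)=\bigo(1)$ except on a vanishing fraction of the summation range, so that the Riemann sum converges to a finite constant and no logarithm is lost. A secondary technical point is to ensure that the large-argument Hankel asymptotics hold \emph{uniformly} on the closed sector $\snorm{\arg w}\leq\tfrac{3\pi}4$, which contains the actual arguments $\tfrac\pi4,\tfrac{3\pi}4$ of $w_k$ but stays strictly inside the sector $\snorm{\arg w}<\pi$ of validity.
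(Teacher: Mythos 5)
Your proof is correct and follows essentially the same route as the paper: the same splitting of the sum where the Hankel argument has modulus one (i.e.\ at $\snorm{k}\approx\iperf\snorm{x}^{-2}$), the same small- and large-argument Hankel estimates from Lemma \ref{estHankelFktlem}, and the same integral-comparison device for the logarithmic case $n=2$ as in the paper's estimate \eqref{HishidaEst}. The only deviation is in the elementary tail summation: you extract the factor $\e^{-\sqrt{\pi/\per}\snorm{x}}$ via $2\sqrt{k}\geq 1+\sqrt{k}$ and compare the remaining series with an integral, whereas the paper sums $k^{(n-3)/2}q^{\sqrt{k}}$ with $q=\e^{-2\sqrt{\pi/\per}\snorm{x}}$ by a geometric-series trick; both give the same bound, though for $n\geq 4$ your summand is not monotone, so the integral comparison strictly needs the (trivial) extra remark that the sum is exponentially small once $\sqrt{\pi/\per}\snorm{x}\geq 1$.
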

\begin{proof}
The estimates are based on the asymptotic properties of Hankel functions summarized in Lemma \ref{estHankelFktlem} below.
We start with the case $n>2$.
Employing \eqref{estHankelFktAtInfty} with $\epsilon=1$, we deduce
\begin{align}
&\forall k\in\Z\ \forall \snorm{x}\geq \sqrt{\frac{\per}{2\pi}}:\quad 
\snormL{\hankelone{\frac{n}{2}-1}\Bp{\sqrt{-i\perf k}\cdot \snorm{x}}} \leq \Ccn{C}\,\snorm{k}^{-\frac{1}{4}}\,\snorm{x}^{-\half}\,
\e^{-\sqrt{\frac{\pi}{\per}}\snorm{k}^\half \snorm{x}}\label{oldestHankelFktAtInfty}.
\end{align}
Employing \eqref{estHankelFktAtZero} with $R=1$, we obtain:
\begin{align}
&\forall k\in\Z\ \forall \snorm{x}\leq \sqrt{\iperf}\snorm{k}^{-\half}:\quad
\snormL{\hankelone{\frac{n}{2}-1}\Bp{\sqrt{-i\perf k}\cdot \snorm{x}}} \leq \Ccn{C}\,\snorm{k}^{-\frac{n-2}{4}}\,\snorm{x}^{-\frac{n-2}{2}}.
\label{oldestHankelFktAtZero}
\end{align}
It follows that
\begin{align}
\begin{aligned}
\sum_{k\in\Z\setminus\set{0}} \snorml{\fundsolssrk(x)}^2 
&\leq \Ccn{C} 
\Bp{\sum_{\snorm{k}\leq \iperf\snorm{x}^{-2}} \snorm{k}^{\frac{n-2}{2}}\, \snorm{x}^{2-n}\, \snorm{k}^{\frac{2-n}{2}}\,\snorm{x}^{2-n}\\
&\qquad + \sum_{\snorm{k}> \iperf\snorm{x}^{-2}} \snorm{k}^{\frac{n-2}{2}}\, \snorm{x}^{2-n}\, \snorm{k}^{-\half}\, \snorm{x}^{-1} 
\e^{-2\sqrt{\frac{\pi}{\per}}\snorm{k}^\half\snorm{x}}}\\
&\leq \Ccn{C} \Bp{
\snorm{x}^{-2}\cdot\snorm{x}^{2(2-n)}\cdot \chi_{\bb{0,\sqrt{\iperf}}}(\snorm{x}) \\
&\qquad +  \sum_{\snorm{k}\geq 1 } \snorm{k}^{\frac{n-3}{2}}\, \snorm{x}^{1-n}\, \e^{-2\sqrt{\frac{\pi}{\per}}\snorm{k}^\half\snorm{x}}}.
\end{aligned}\label{l2estFundsolssrk_iterationpoint}
\end{align}
For $\snorm{q}<1$ we observe that
\begin{align*}
\sum_{k\geq 1 } \snorm{k}^{\frac{n-3}{2}}\, q^{k^\half} 
&= \sum_{j=1}^\infty \sum_{k=j^2}^{(j+1)^2-1} k^{\frac{n-3}{2}}\, q^{k^\half}\\
&\leq \sum_{j=1}^\infty \sum_{k=j^2}^{(j+1)^2-1} \np{j+1}^{n-3}\, q^{j}\\
&= \sum_{j=1}^\infty j\, \np{j+1}^{n-3}\, q^{j}\\
&\leq q\,\sum_{j=1}^\infty j\, (j+1)\,(j+2)\ldots\,(j+n-3)\, q^{j-1}\\
&= q \, \partial_q^{n-2}\Bb{\sum_{j=1}^\infty q^{j+n-3}}
= q \, \partial_q^{n-2}\bb{(1-q)^{-1}} \\
&= (n-2)!\cdot \,q\,(1-q)^{1-n},
\end{align*}
from which we deduce 
\begin{align*}
\sum_{k\in\Z\setminus\set{0}} \snorml{\fundsolssrk(x)}^2 
&\leq \Ccn{C} \Bp{
\snorm{x}^{2(1-n)}\cdot \chi_{\bb{0,\sqrt{\iperf}}}(\snorm{x})   \\
&\qquad  + \snorm{x}^{1-n}\,\e^{-2\sqrt{\frac{\pi}{\per}}\snorm{x}}\bp{1-\e^{-2\sqrt{\frac{\pi}{\per}}\snorm{x}}}^{1-n}}\\
&\leq \Ccn{C} \snorm{x}^{2(1-n)}\cdot \e^{-\sqrt{\frac{\pi}{\per}}\snorm{x}}
\end{align*}
and consequently \eqref{l2estFundsolssrkngeq} in the case $n>2$. In the case $n=2$, 
we employ \eqref{estHankelzeroFktAtZero} to deduce
\begin{align}
&\forall k\in\Z\ \forall \snorm{x}\leq \sqrt{\iperf}\snorm{k}^{-\half}:\quad
\snormL{\hankelone{0}\Bp{\sqrt{-i\perf k}\cdot \snorm{x}}} \leq \Ccn{C}\,\snormL{\log\Bp{\sqrt{\perf} \snorm{k}^\half \snorm{x}}}.
\label{oldestHankelzeroFktAtZero}
\end{align}
It follows in the case $n=2$ that\footnote{I would like to thank Prof.~Toshiaki Hishida for suggesting this estimate to me and thereby improving my original proof.}
\begin{align}\label{HishidaEst}
\begin{aligned}
\sum_{\snorm{k}\leq \iperf\snorm{x}^{-2}} \snorml{\fundsolssrk(x)}^2 &\leq 
\Ccn{C} \sum_{\snorm{k}\leq \iperf\snorm{x}^{-2}} \snormL{\log\Bp{\sqrt{\perf} \snorm{k}^\half \snorm{x}}}^2\\
&\leq \Ccn{C} \int_0^{\iperf\snorm{x}^{-2}} \snormL{\log\Bp{\sqrt{\perf} t^\half \snorm{x}}}^2\,\dt\,\cdot \chi_{\bb{0,\sqrt{\iperf}}}(\snorm{x})\\
&\leq \Ccn{C} \snorm{x}^{-2} \int_0^1 \snorml{\log(s)}^2\,s\,\ds \cdot \chi_{\bb{0,\sqrt{\iperf}}}(\snorm{x})\\
&\leq \Ccn{C} \snorm{x}^{-2} \cdot \chi_{\bb{0,\sqrt{\iperf}}}(\snorm{x}).
\end{aligned}
\end{align}
Estimate \eqref{oldestHankelFktAtInfty} is still valid in the case $n=2$. We can thus proceed as in \eqref{l2estFundsolssrk_iterationpoint}
and obtain \eqref{l2estFundsolssrkngeq} also in the case $n=2$. 
\end{proof}

\begin{lem}\label{pointwiseEstLem}
The convolution $\fundsollaplace*\fundsolssrk$ defined in \eqref{defoffundsollaplaceCONVLfundsolssrk} satisfies
\begin{align}
&\forall \epsilon>0\ \exists \Ccn{C}>0\ \forall \snorm{x}\geq \epsilon:\ \snorml{\partial_i\partial_j\nb{\fundsollaplace*\fundsolssrk}(x)} \leq \Ccn{C}\,\snorm{k}^{-1}\,\snorm{x}^{-n}.\label{pointwiseEstLemEst}
\end{align}
\end{lem}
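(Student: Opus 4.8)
The plan is to convert the estimate \eqref{pointwiseEstLemEst} into two separate and elementary pointwise bounds — one for a homogeneous Calder\'on--Zygmund kernel, one for $\partial_i\partial_j\fundsolssrk$ — by means of a partial-fraction identity on the Fourier side. Recall from \eqref{fundsollaplaceCONVLfundsolssrk_secondorderderivatives} and the computation following it that $\FT_{\R^n}\bb{\partial_i\partial_j\nb{\fundsollaplace*\fundsolssrk}}(\xi)=\frac{\xi_i\xi_j}{\snorm{\xi}^2}\cdot\frac{1}{\snorm{\xi}^2+i\perf k}$, while \eqref{fundsolssrkFT} gives $\FT_{\R^n}\bb{\partial_i\partial_j\fundsolssrk}(\xi)=-\frac{\xi_i\xi_j}{\snorm{\xi}^2+i\perf k}$. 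First I would split the rational factor by partial fractions in the variable $\snorm{\xi}^2$,
\begin{align*}
\frac{\xi_i\xi_j}{\snorm{\xi}^2\bp{\snorm{\xi}^2+i\perf k}}=\frac{1}{i\perf k}\Bp{\frac{\xi_i\xi_j}{\snorm{\xi}^2}-\frac{\xi_i\xi_j}{\snorm{\xi}^2+i\perf k}},
\end{align*}
and observe that every symbol occurring here lies in $\LR{\infty}(\Rn)$, so its inverse Fourier transform is a well-defined tempered distribution. Setting $K_{ij}:=\iFT_{\Rn}\bb{\xi_i\xi_j\snorm{\xi}^{-2}}$ and restricting to $\Rn\setminus\set{0}$, where all distributions in play are continuous functions and the Dirac contributions drop out, I obtain the pointwise identity
\begin{align*}
\partial_i\partial_j\nb{\fundsollaplace*\fundsolssrk}(x)=\frac{1}{i\perf k}\bp{K_{ij}(x)+\partial_i\partial_j\fundsolssrk(x)},\qquad x\in\Rn\setminus\set{0}.
\end{align*}
The symbol $\xi_i\xi_j\snorm{\xi}^{-2}$ is bounded, positively homogeneous of degree $0$ and smooth on the unit sphere, so $K_{ij}$ — the classical second-order Calder\'on--Zygmund (double Riesz) kernel — is, on $\Rn\setminus\set{0}$, positively homogeneous of degree $-n$ and smooth on the sphere; hence $\snorml{K_{ij}(x)}\leq\Ccn{C}\snorm{x}^{-n}$. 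Therefore \eqref{pointwiseEstLemEst} will follow once I establish $\snorml{\partial_i\partial_j\fundsolssrk(x)}\leq\Ccn{C}\snorm{x}^{-n}$, uniformly in $k\in\Z\setminus\set{0}$.

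For this last bound I would use the scaling structure of \eqref{HelmholtzEqFundsolEq}. Put $\mu:=\perf\snorm{k}>0$ and let $\Phi_\sigma$ ($\sigma\in\set{+1,-1}$) be the function defined by \eqref{defofFundsolSSRk} with $\perf k$ replaced by $\sigma$, so that $(-\Delta+i\sigma)\Phi_\sigma=\delta_{\Rn}$. Since $\sqrt{-i\perf k}=\sqrt{\mu}\,\sqrt{-i\,\sgn(k)}$, inspection of \eqref{defofFundsolSSRk} yields the scaling relation $\fundsolssrk(x)=\mu^{\frac{n-2}{2}}\Phi_{\sgn(k)}(\sqrt{\mu}\,x)$, hence $\partial_i\partial_j\fundsolssrk(x)=\mu^{\frac{n}{2}}\bp{\partial_i\partial_j\Phi_{\sgn(k)}}(\sqrt{\mu}\,x)$. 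Now $\Phi_{+1}$ and $\Phi_{-1}$ are two \emph{fixed} functions. Using the expansions of $\hankelone{n/2-1}$ at $0$ and at $\infty$ from Lemma \ref{estHankelFktlem}, together with the recurrence $\ddz\bp{z^{-\nu}\hankelone{\nu}(z)}=-z^{-\nu}\hankelone{\nu+1}(z)$ to express the two differentiations through $\hankelone{n/2}$ and $\hankelone{n/2+1}$, one gets for each $\sigma$
\begin{align*}
\snorml{\partial_i\partial_j\Phi_\sigma(y)}\leq\Ccn{C}\,\snorm{y}^{-n}\,\e^{-c\snorm{y}}\qquad\bp{y\in\Rn\setminus\set{0}}
\end{align*}
for some $c>0$: near the origin the leading singularity of $\Phi_\sigma$ matches that of the Laplace fundamental solution, whence $\partial_i\partial_j\Phi_\sigma(y)=\bigo\bp{\snorm{y}^{-n}}$ as $y\ra0$, while as $\snorm{y}\ra\infty$ the function $\Phi_\sigma$ and all its derivatives decay exponentially. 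Substituting back and using $\mu^{\frac{n}{2}}\bp{\sqrt{\mu}\,\snorm{x}}^{-n}=\snorm{x}^{-n}$,
\begin{align*}
\snorml{\partial_i\partial_j\fundsolssrk(x)}\leq\Ccn{C}\,\snorm{x}^{-n}\,\e^{-c\sqrt{\perf\snorm{k}}\,\snorm{x}}\leq\Ccn{C}\,\snorm{x}^{-n},
\end{align*}
uniformly in $k\in\Z\setminus\set{0}$ (in fact for every $x\neq0$, so the restriction $\snorm{x}\geq\epsilon$ in the statement is only a convenience). Combined with the bound for $K_{ij}$, this gives \eqref{pointwiseEstLemEst}.

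The partial-fraction step and the homogeneity bound on $K_{ij}$ are routine; the only genuine work is the pointwise estimate for $\partial_i\partial_j\fundsolssrk$ and, within it, the \emph{uniformity in $k$}. The scaling reduction above is exactly what disposes of this cleanly, because it replaces $\fundsolssrk$ by the two fixed functions $\Phi_{\pm1}$, and the powers of $\mu=\perf\snorm{k}$ produced by differentiating cancel exactly against those produced by rescaling the argument, leaving only the harmless factor $\e^{-c\sqrt{\mu}\,\snorm{x}}$. If one prefers to argue directly from the Hankel asymptotics \eqref{estHankelFktAtInfty}, \eqref{estHankelFktAtZero}, \eqref{estHankelzeroFktAtZero}, splitting at $\sqrt{\perf\snorm{k}}\,\snorm{x}=1$, then one must verify by hand — most transparently after the substitution $w:=\sqrt{\perf\snorm{k}}\,\snorm{x}$ — that the polynomial-in-$\sqrt{\snorm{k}}$ prefactors are absorbed by $\e^{-cw}$, and one has to treat the low-frequency regime $\sqrt{\perf\snorm{k}}\,\snorm{x}<1$ separately (for $\snorm{x}\geq\epsilon$ it concerns only finitely many $k$, on which $\partial_i\partial_j\fundsolssrk$ is a continuous function of $x$ away from the origin).
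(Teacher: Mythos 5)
Your proposal is correct, and it reaches \eqref{pointwiseEstLemEst} by a genuinely different route than the paper. The paper never leaves physical space: writing $R=\snorm{x}/2$ and inserting a cut-off in $\snorm{y}$, it splits the convolution integral \eqref{fundsollaplaceCONVLfundsolssrk_secondorderderivatives} into an inner ball $B_R$, an annulus $B_{4R,R/2}$ and the far field $B^{3R}$, integrates by parts in the inner and outer parts, and extracts the decisive factor $\snorm{k}^{-1}$ region by region from the Hankel asymptotics of Lemma \ref{estHankelFktlem} --- via the exponential factor $\e^{-\sqrt{\pi/\per}\snorm{k}^{1/2}\snorm{y}}$ in the annulus and far-field terms, and via a rescaling of the radial integral in the inner term. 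You instead obtain $\snorm{k}^{-1}$ once and for all algebraically, by partial fractions on the Fourier side, after which only two $k$-independent facts are needed: the degree-$(-n)$ homogeneity of the double Riesz kernel $K_{ij}$ away from the origin, and the uniform bound $\snorml{\partial_i\partial_j\fundsolssrk(x)}\leq C\snorm{x}^{-n}$, which your scaling reduction to the two fixed profiles $\Phi_{\pm 1}$ settles cleanly (the powers of $\mu=\perf\snorm{k}$ cancel exactly, as you note). What your route buys: the uniformity in $k$ is structural rather than a matter of bookkeeping, the estimate holds for every $x\neq 0$ with a constant independent of $\epsilon$ (and with an extra factor $\e^{-c\sqrt{\perf\snorm{k}}\,\snorm{x}}$ available on the Helmholtz term), and the splitting mirrors the steady-plus-remainder structure of $\fundsolcompl$ itself. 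What the paper's route buys: it works directly with the convolution integral and therefore does not need the distributional identities you import, namely the Fourier formula displayed after \eqref{fundsollaplaceCONVLfundsolssrk_secondorderderivatives} (which the paper only asserts with ``one may verify'') and the identification of $\iFT_{\R^n}\bb{\xi_i\xi_j\snorm{\xi}^{-2}}$ with a smooth $(-n)$-homogeneous function on $\R^n\setminus\set{0}$. If you write your version up, make two small points explicit: first, the passage from the identity of tempered distributions to the pointwise identity uses that all three terms restrict to continuous functions on $\R^n\setminus\set{0}$ and that the function in \eqref{fundsollaplaceCONVLfundsolssrk_secondorderderivatives} represents the distributional derivative there; second, the recurrence $\ddz\bp{z^{-\nu}\hankelone{\nu}(z)}=-z^{-\nu}\hankelone{\nu+1}(z)$ you invoke is not \eqref{diffOfHankelFormula} itself but follows from it together with the standard three-term recurrence --- it is in fact a good choice, since it keeps all Hankel orders positive, so \eqref{estHankelFktAtZero} applies even in the case $n=2$ without appealing to \eqref{estHankelzeroFktAtZero}.
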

\begin{proof}
Fix $\epsilon>0$ and consider some $x\in\R^n$ with $\snorm{x}\geq \epsilon$. Put $R:=\frac{\snorm{x}}{2}$.  
Let $\cutoff\in\CRci(\R;\R)$ be a ``cut-off'' function with
\begin{align*}
\cutoff(r)=
\begin{pdeq}
&0 && \text{when }{0\leq \snorm{r}\leq \half},\\
&1 && \text{when }{1\leq \snorm{r}\leq 3},\\
&0 && \text{when }{4\leq \snorm{r}}.
\end{pdeq}
\end{align*}
Define $\cutoff_R:\R^n\ra\R$ by $\cutoff_R(y):=\cutoff\bp{R^{-1}\snorm{y}}$. We use $\cutoff_R$ to decompose the integral in \eqref{fundsollaplaceCONVLfundsolssrk_secondorderderivatives} as
\begin{align*}
\partial_i\partial_j \nb{\fundsollaplace*\fundsolssrk} (x) 
&= \int_{B_{4R,R/2}} \partial_i\fundsollaplace(x-y)\,\partial_j\fundsolssrk(y)\,\cutoff_R(y)\,\dy\\ 
&\quad +\int_{B_{R}} \partial_i\fundsollaplace(x-y)\,\partial_j\fundsolssrk(y)\,\bp{1-\cutoff_R(y)}\,\dy\\ 
&\quad +\int_{B^{3R}} \partial_i\fundsollaplace(x-y)\,\partial_j\fundsolssrk(y)\,\bp{1-\cutoff_R(y)}\,\dy\\
&=: I_1(x) + I_2(x) + I_3(x).
\end{align*}
Recalling the definition \eqref{defofFundsolSSRk} of $\fundsolssrk$ as well as the property \eqref{diffOfHankelFormula}
and the estimate \eqref{estHankelFktAtInfty} of the Hankel function, we can estimate for $\snorm{y}\geq R/2$:  
\begin{align*}
\snorml{\partial_j \fundsolssrk(y)} 
&\leq \Ccn{C} \snorm{k}^{\frac{n-2}{4}}\Bp{
\snormL{\partial_j\Bb{\snorm{y}^{\frac{2-n}{2}}}\, \hankelone{\frac{n-2}{2}} \Bp{\sqrt{-i\perf k}\cdot \snorm{y}}}\\
&\qquad\qquad\quad+\snormL{{\snorm{y}^{\frac{2-n}{2}}}\, \partial_j\Bb{\hankelone{\frac{n-2}{2}} \Bp{\sqrt{-i\perf k}\cdot \snorm{y}}}}} \\
&\leq \Ccn{C} \Bp{\snorm{k}^{\frac{n}{4}-\frac{3}{4}}\, \snorm{y}^{-\frac{n}{2}-\half} + \snorm{k}^{\frac{n}{4}-\frac{1}{4}}\, \snorm{y}^{-\frac{n}{2}+\half}}
\e^{-\sqrt{\frac{\pi}{\per}}\snorm{k}^\half\snorm{y}}\\
&\leq \Ccn{C} \snorm{k}^{-1}\,\snorm{y}^{-(n+1)}.
\end{align*}
Consequently, we obtain:
\begin{align*}
\snorml{I_1(x)} &\leq \Ccn{C} \int_{B_{4R,R/2}} \snorm{x-y}^{1-n}\, \snorm{k}^{-1}\,\snorm{y}^{-(n+1)}\,\dy \leq \Ccn{C}\, \snorm{k}^{-1}\,R^{-n}.
\end{align*}
To estimate $I_2$, we integrate partially and employ polar coordinates to deduce
\begin{align*}
\snorml{I_2(x)} 
&\leq \Ccn{C} 
\int_{B_R}\snorml{\partial_j\partial_i \fundsollaplace(x-y)}\, \snorml{\fundsolssrk(y)} +
\snorml{\partial_i\fundsollaplace(x-y)}\, \snorml{\fundsolssrk(y)}\, R^{-1}\,\dy \\
&\leq \Ccn{C} \int_{B_R} R^{-n}\, \snorml{\fundsolssrk(y)}\,\dy\\
&\leq \Ccn{C} \int_{B_R} R^{-n}\, \snorm{k}^{\frac{n-2}{4}}\, \snorm{y}^{\frac{2-n}{2}}\, \snormL{\hankelone{\frac{n-2}{2}}\Bp{\sqrt{-i\perf k}\cdot \snorm{y}}}\,\dy\\
&\leq \Ccn{C} \int_0^R R^{-n}\, \snorm{k}^{\frac{n-2}{4}}\, r^{\frac{n}{2}}\, \snormL{\hankelone{\frac{n-2}{2}}\Bp{\sqrt{-i\perf k}\cdot r}}\,\dr\\
&\leq \Ccn{C} \int_0^\infty R^{-n}\, \snorm{k}^{-1}\, s^{\frac{n}{2}}\, \snormL{\hankelone{\frac{n-2}{2}}\Bp{\sqrt{-i\perf}\cdot\sqrt{\frac{k}{\snorm{k}}}\cdot s}}\,\ds.
\end{align*}
Employing in the case $n>2$ estimate \eqref{estHankelFktAtZero} in combination with \eqref{estHankelFktAtInfty}, we obtain
\begin{align*}
\snorml{I_2(x)} 
&\leq \Ccn{C}\,  R^{-n}\, \snorm{k}^{-1}\, \Bp{ \int_0^1 s^{\frac{n}{2}}\, s^{\frac{2-n}{2}}\,\ds + 
\int_1^\infty s^{\frac{n}{2}}\, s^{-\half}\,\e^{-\sqrt{\frac{\pi}{\per}}s}\,\ds  } \leq \Ccn{C}\,  R^{-n}\, \snorm{k}^{-1}. 
\end{align*}
When $n=2$, we use estimate \eqref{estHankelzeroFktAtZero} in combination with \eqref{estHankelFktAtInfty} and obtain also in this case
\begin{align*}
\snorml{I_2(x)} 
&\leq \Ccn{C}\,  R^{-n}\, \snorm{k}^{-1}\, \Bp{ \int_0^1 s\cdot \snormL{\log\Bp{\sqrt{\frac{\pi}{\per}}s}}\,\ds + 
\int_1^\infty s^{\half}\,\e^{-\sqrt{\frac{\pi}{\per}}s}\,\ds  } \leq \Ccn{C}\,  R^{-n}\, \snorm{k}^{-1}. 
\end{align*}
In order to estimate $I_3$, we again integrate partially and utilize \eqref{estHankelFktAtInfty}:
\begin{align*}
\snorml{I_3(x)} 
&\leq \Ccn{C}\, \int_{B^{3R}} \snorml{\partial_j\partial_i \fundsollaplace(x-y)}\, \snorml{\fundsolssrk(y)} +
\snorml{\partial_i\fundsollaplace(x-y)}\, \snorml{\fundsolssrk(y)}\, R^{-1}\,\dy \\
&\leq \Ccn{C} \int_{B^{3R}} R^{-n}\, \snorml{\fundsolssrk(y)}\,\dy\\
&\leq \Ccn{C} \int_{B^{3R}} R^{-n}\, \snorm{k}^{\frac{n-2}{4}}\, \snorm{y}^{\frac{2-n}{2}}\, \snormL{\hankelone{\frac{n-2}{2}}\Bp{\sqrt{-i\perf k}\cdot \snorm{y}}}\,\dy\\
&\leq \Ccn{C} \int_{B^{3R}} R^{-n}\, \snorm{k}^{\frac{n-3}{4}}\, \snorm{y}^{\frac{1-n}{2}}\,\e^{-\sqrt{\frac{\pi}{\per}}\snorm{k}^\half\snorm{y}} \,\dy\\
&\leq \Ccn{C} \int_{B^{3R}} R^{-n}\, \snorm{k}^{\frac{n-3}{4}}\, \snorm{y}^{\frac{1-n}{2}}\, \bp{\snorm{k}^\half\,\snorm{y}}^{-\frac{n+3}{2}} \,\dy
\leq \Ccn{C}\, R^{-n}\,\snorm{k}^{-\frac{3}{2}}\leq \Ccn{C}\, R^{-n}\,\snorm{k}^{-1}.
\end{align*}
Since $\snorm{x}=2R$, we conclude \eqref{pointwiseEstLemEst} by collecting the estimates for $I_1$, $I_2$ and $I_3$.
\end{proof}

\begin{lem}\label{estHankelFktlem}
Hankel functions are analytic in $\C\setminus\set{0}$ with
\begin{align}\label{diffOfHankelFormula}
\forall\nu\in\C\ \forall z\in\C\setminus\set{0}:\quad \ddz\hankelone{\nu} (z) = \hankelone{\nu-1}(z) - \frac{\nu}{z}\,\hankelone{\nu}(z).
\end{align}
The Hankel functions satisfy the following estimates:
\begin{align}
&\forall\nu\in\C\ \forall\epsilon>0\ \exists \Ccn[estHankelFktInftyConst]{C}>0\ \forall \snorm{z}\geq \epsilon:&&
\snorml{\hankelone{\nu} \np{z}}\leq \const{estHankelFktInftyConst}\, \snorm{z}^{-\frac{1}{2}}\, \e^{-\impart z},
\label{estHankelFktAtInfty}\\
&\forall\nu\in\R_+\ \forall R>0\ \exists \Ccn[estHankelFktZeroConst]{C}>0\ \forall \snorm{z}\leq R:&&
\snorml{\hankelone{\nu} \np{z}}\leq 
\const{estHankelFktZeroConst}\, \snorm{z}^{-\nu},\label{estHankelFktAtZero}\\
&\forall R>0\ \exists \Ccn[estHankelzeroFktZeroConst]{C}>0\ \forall \snorm{z}\leq R: && 
\snorml{\hankelone{0} \np{z}}\leq 
\const{estHankelzeroFktZeroConst}\, \snorml{\log\np{\snorm{z}}}. \label{estHankelzeroFktAtZero}
\end{align}
\end{lem}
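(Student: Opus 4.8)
The plan is to obtain every assertion from the classical theory of Bessel functions, using only the power series at the origin, the asymptotic expansion at infinity, and the standard recurrence relations; the proof involves no PDE content. First I would write $\hankelone{\nu}=J_\nu+iY_\nu$, where $J_\nu,Y_\nu$ are the Bessel functions of the first and second kind. Since $J_\nu(z)=(z/2)^\nu\sum_{m=0}^{\infty}\frac{(-1)^m}{m!\,\Gamma(m+\nu+1)}(z/2)^{2m}$ is $(z/2)^\nu$ times an entire function, and $Y_\nu=\bp{J_\nu\cos(\nu\pi)-J_{-\nu}}/\sin(\nu\pi)$ for $\nu\notin\Z$, extended to integer orders via $Y_n=\lim_{\nu\to n}Y_\nu$ (which introduces a $\log z$ term), both functions---hence $\hankelone{\nu}$---are single-valued and analytic once the principal branch of $z^\nu$ (and, for $\nu\in\Z$, of $\log z$) is fixed; equivalently, they are analytic on $\C\setminus(-\infty,0]$. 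This proves the analyticity claim, and this domain is all that is needed, since the arguments occurring in the applications of the lemma lie on the rays $\arg z\in\set{\pi/4,\,3\pi/4}$. For \eqref{diffOfHankelFormula} I would invoke the two recurrences valid for every cylinder function $\mathscr{C}_\nu$, namely $\mathscr{C}_{\nu-1}(z)+\mathscr{C}_{\nu+1}(z)=\tfrac{2\nu}{z}\mathscr{C}_\nu(z)$ and $\mathscr{C}_{\nu-1}(z)-\mathscr{C}_{\nu+1}(z)=2\mathscr{C}_\nu'(z)$; adding them eliminates $\mathscr{C}_{\nu+1}$ and leaves $\mathscr{C}_\nu'(z)=\mathscr{C}_{\nu-1}(z)-\tfrac{\nu}{z}\mathscr{C}_\nu(z)$, which applies in particular to $\hankelone{\nu}$.

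For the bound at infinity \eqref{estHankelFktAtInfty} I would use the asymptotic expansion $\hankelone{\nu}(z)=\sqrt{\tfrac{2}{\pi z}}\,\e^{i(z-\nu\pi/2-\pi/4)}\bp{1+\bigo(\snorm{z}^{-1})}$, valid as $\snorm{z}\to\infty$ uniformly on each sector $\snorm{\arg z}\le\pi-\delta$ (in particular along the rays that matter here). Since $\snorml{\e^{i(z-\nu\pi/2-\pi/4)}}=c_\nu\,\e^{-\impart z}$ with $c_\nu$ depending only on $\nu$, this gives $\snorml{\hankelone{\nu}(z)}\le C\,\snorm{z}^{-1/2}\e^{-\impart z}$ for $\snorm{z}\ge M$, $M$ large. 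On the remaining compact annulus $\set{\epsilon\le\snorm{z}\le M}$, $\hankelone{\nu}$ is bounded whereas $\snorm{z}^{-1/2}\e^{-\impart z}$ is bounded away from zero, so the estimate survives there once $C$ is enlarged; patching the two regimes gives \eqref{estHankelFktAtInfty}.

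The estimates near the origin, \eqref{estHankelFktAtZero} and \eqref{estHankelzeroFktAtZero}, I would read straight off the series. One has $\snorml{J_\nu(z)}\le C\snorm{z}^{\nu}$ on $\snorm{z}\le R$. For $\nu>0$ the leading singular term of $Y_\nu$ at $0$ is the pure power $-\tfrac{\Gamma(\nu)}{\pi}(z/2)^{-\nu}$ when $\nu\notin\N$, respectively $-\tfrac{(n-1)!}{\pi}(z/2)^{-n}$ when $\nu=n\in\N$ (any logarithmic contribution being of strictly lower order there), so $\snorml{Y_\nu(z)}\le C\snorm{z}^{-\nu}$ on a punctured neighborhood of $0$; since $\snorm{z}^{\nu}\le R^{2\nu}\snorm{z}^{-\nu}$ on $\snorm{z}\le R$ and $\hankelone{\nu}$ is bounded on compact subsets away from $0$, summing the two bounds gives \eqref{estHankelFktAtZero}. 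For $\nu=0$ the series reads $Y_0(z)=\tfrac{2}{\pi}\log(z/2)\,J_0(z)+g(z)$ with $g$ analytic near $0$ and $J_0(0)=1$, whence $\snorml{\hankelone{0}(z)}\le C\bp{1+\snorml{\log\snorm{z}}}$ near $0$; combined with the boundedness of $\hankelone{0}$ on compact subsets of $\set{\snorm{z}\le R}\setminus\set{0}$, this yields \eqref{estHankelzeroFktAtZero}.

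I do not expect a genuine obstacle, as the lemma merely collects classical facts. The only points needing care are bookkeeping: fixing the branch and pinning down the sector in which the asymptotic expansion is valid (handled by the compactness patch on the annulus), and separating the order $\nu=0$, whose singularity at the origin is logarithmic, from the orders $\nu>0$, whose singularity is a pure power $\snorm{z}^{-\nu}$ even at integer values of $\nu$---which is exactly why the two distinct estimates \eqref{estHankelFktAtZero} and \eqref{estHankelzeroFktAtZero} are required.
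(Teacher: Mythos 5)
Your proposal is correct and amounts to a fleshed-out derivation of exactly the facts the paper simply cites from Abramowitz--Stegun (the recurrence 9.1.27, the large-$z$ asymptotics 9.2.3, and the small-$z$ behaviour 9.1.8--9.1.9), so it is essentially the same approach. Your remark that analyticity on the slit plane suffices because the arguments $\sqrt{-i\perf k}\,\snorm{x}$ lie on the rays $\arg z\in\set{\pi/4,3\pi/4}$ is a sensible clarification of the lemma's statement, not a deviation from it.
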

\begin{proof}
The recurrence relation \eqref{diffOfHankelFormula} is a well-know property of various Bessel functions; see for example
\cite[9.1.27]{AbramowitzStegunHandbook}.
We refer to \cite[9.2.3]{AbramowitzStegunHandbook} for the asymptotic behaviour \eqref{estHankelFktAtInfty} of $\hankelone{\nu}(z)$ as $z\ra \infty$.
See \cite[9.1.9 and 9.1.8]{AbramowitzStegunHandbook} for the asymptotic behaviour \eqref{estHankelFktAtZero} and \eqref{estHankelzeroFktAtZero} 
of $\hankelone{\nu}(z)$ as $z\ra 0$.
\end{proof}

\bibliographystyle{abbrv}
                                                                                                                                                                                                                         
\end{document}